\documentclass{siamart251216}
\usepackage{lipsum}
\usepackage{amsfonts}
\usepackage{graphicx}
\usepackage{epstopdf}
\usepackage{algpseudocode}
\usepackage{caption}
\usepackage{subcaption}
\usepackage{tikz-cd}
\usepackage{pgfplots}
\usepackage{booktabs}
\usepackage{pgfplotstable}
\usepackage{wrapfig}
\pgfplotsset{compat=1.18}
\usepackage{titlesec}
\usepgfplotslibrary{statistics}
\pgfkeys{/pgf/number format/.cd, fixed, precision=3}

\usepackage{caption}
\usepackage{enumitem}
\setlist{noitemsep,topsep=1pt}

\ifpdf%
  \DeclareGraphicsExtensions{.eps,.pdf,.png,.jpg}
\else
  \DeclareGraphicsExtensions{.eps}
\fi
\usepackage{amsopn}

\usepackage{booktabs}

\def\XNorm#1{\left\| #1 \right\|}                     % norm
\def\XRange#1{{\rm range}(#1)}  % range space
\def\XVec#1{{\mathbf #1}}       % vector notation

\def\Xc{\XVec{c}}

\def\Xu{\XVec{u}}

\def\Xf{\XVec{f}}
\def\Xe{\XVec{e}}

\def\Xg{\XVec{g}}
\def\Xs{\XVec{s}}

\def\Xdof{\mathbf{dof}}

\def\XfG{\mathbf{f_G}}
\def\XcG{\mathbf{c_G}}

\newcommand{\ZQS}{{Q_S}}
\newcommand{\ZR}{{R}}
\newcommand{\ZQR}{{Q_R}}
\newcommand{\ZS}{{S}}

\def\XReals{{\mathbb R}}
\newcommand{\XMt}{{\widetilde{M}}}

\def\XMt{{\widetilde{M}}}
            
\newcommand{\Hcurl}{{H(\mbox{curl})} }

\newcommand{\TheTitle}{%
Nodal Coarsening and Sparse Ideal Interpolation for $\Hcurl$ Problems in Algebraic Multigrid}

\author{
Taoli Shen\thanks{Department of Mathematics, 
The Pennsylvania State University, University Park, PA 16802, USA 
(\email{tzs5765@psu.edu})}
\and
James Brannick\thanks{Department of Mathematics, 
The Pennsylvania State University, University Park, PA 16802, USA 
(\email{jjb23@psu.edu})}
\and
Robert Falgout\thanks{Lawrence Livermore National Laboratory, 
Livermore, CA 94550, USA 
(\email{falgout2@llnl.gov})}
\and
Karsten Kahl\thanks{Fachgruppe Mathematik \& Informatik, 
Bergische Universität Wuppertal, 42119 Wuppertal, Germany 
(\email{kkahl@uni-wuppertal.de})}
\and
Jacob Schroder\thanks{Department of Mathematics \& Statistics, 
University of New Mexico, Albuquerque, NM 87106, USA 
(\email{jbschroder@unm.edu})}
}
\title{{\TheTitle}}
\begin{document}

\maketitle

% \begin{center}
% In collaboration with:\\
%   {\TheCollaborators}
% \end{center}
% \vspace{0.5cm}

% ---------------------------------------------
% ---------------------------------------------

\begin{abstract}
  We propose a sparse interpolation construction and a practical coarsening algorithm for the algebraic multigrid (AMG) method, tailored towards $\Hcurl$. Building on the generalized AMG framework, we introduce an interior/exterior splitting that yields both a refinement-based and a fully algebraic construction of the interpolation. The refinement-based approach follows geometric hierarchy, while the purely algebraic interpolation is constructed through a coarsening process that first coarsens a nodal dual problem and then builds coarse and fine variables using a matching algorithm. We establish the weak approximation property and the commuting relation under certain assumptions. Combined with matching block smoothers, the proposed interpolation yields an effective algebraic multilevel method. Numerical experiments show robustness under strong coefficient jumps, where the proposed methods substantially outperform standard geometric multigrid.

  % A limit of this interpolation, however, is that the quality of thcoarse grid depends on the result of the nodal coarsening.
  
\end{abstract}

\begin{keywords}
  algebraic multigrid, ideal interpolation, \Hcurl, de Rham complex
\end{keywords}

\section{Introduction}\label{sec:intro}
Algebraic multigrid (AMG) is an algorithm for solving linear systems of equations typically derived from the discretization of a partial differential equation,
\begin{equation}
\label{eqn:axb}
A \Xu = \Xf,
\end{equation}
where $A \in \mathbb{R}^{n\times n}$ is a sparse matrix. AMG constructs a fast solver through a complementary process of relaxation and coarse-grid correction. The basic approach in AMG is to identify the smooth error (i.e., error not effectively reduced by relaxation) and then coarsen and build interpolation that directly approximates and treats such error.  For scalar systems, one assumes the constant represents smooth errors locally, and this assumption is the key to the design of classical AMG~\cite{RuStu1987}. 

However, classical injection-based AMG is inadequate for \Hcurl problems with large near-kernel. In this work, we propose a sparse approximation of the ideal interpolation tailored to \Hcurl problems. Unlike the topological constraint-based AMG approach by Reitzinger and Schöberl~\cite{reitzinger2002algebraic}, our construction is derived from the ideal interpolation framework, allowing us to leverage the generalized AMG (GAMG)~\cite{FalgoutGAMG} convergence theory directly. Compared to recent global energy-minimization approaches~\cite{tuminaro2025structure}, our approach achieves jump-robustness through a local harmonic extension while remaining sparse and computationally lightweight.
Next, we introduce the GAMG theory. The two-grid AMG scheme is summarized by the error propagator
\begin{equation}\label{eq:tl}
E_{TG} = (I-M^{-1}A)(I-\Pi_A(P)),
\end{equation} 
where $M$ defines the smoother, $P\in \mathbb{R}^{n \times n_c}$ denotes the interpolation matrix that maps error corrections to the fine level, $A_c := P^T A P$ is the Galerkin coarse-grid matrix, and $ \Pi_A(P) = P A_c^{-1} P^T A$ is the $A-$orthogonal projection of $P$.
% For a multilevel hierarchy, this process recurses to construct progressively coarser levels. 
One of the main goals of this paper is to define a proper splitting of the degrees of freedom (DoFs) into coarse and fine.
% The key idea in our AMG approach is to define the coarse and fine variables using appropriate averages that directly approximate the slow-to-converge error of our AMG non-pointwise smoother, where we view the Classical AMG approach as using averages of size one, i.e., using injection.  We then use compatible relaxation (CR) to gauge (measure) the quality of the splitting based on these averages.  

Assuming that the system matrix $A$ is symmetric, the generalized AMG two-grid theory can be summarized as follows~\cite{FalgoutGAMG,sharp_theory_2005,G-BAMG-2018}.
\begin{equation} \label{eqn-tg-measure}
\begin{split}
  \XNorm{E_{TG}}_A^2 \leq 1 - \frac{1}{K} , &~~~\mbox{where}~~~
  K := K(PR) = \sup_{\Xe} \frac{ \XNorm{(I - PR)\Xe}_{\XMt}^2 }{ \XNorm{\Xe}_A^2 } \geq 1 ,
  % \\[1ex]
  % \XNorm{E_{TG}}_A^2 = 1 - \frac{1}{\XKsharp} , &~~~\mbox{where}~~~
  % \XKsharp := K(\pi_{\XMt}) ,~~ \pi_{\XMt} := P(P^T \XMt P)^{-1}P^T \XMt .
\end{split}
\end{equation}
where $\XMt = M(M+M^T-A)^{-1}M^T$ is the symmetrized smoother. Here, $R: \XReals^{n} \mapsto \XReals^{n_c}$ is any matrix for which $RP = I_c$, the identity on $\XReals^{n_c}$, so that $PR$ is a projection onto $\XRange{P}$. 
The derivation of the GAMG theory~\cite{FalgoutGAMG} which we use to design our AMG solver 
% We can think of $R$ as defining the {\em coarse-grid variables}, i.e., $\Xu_c = R \Xu$.  Also, let $S: \XReals^{n_s} \mapsto \XReals^n$ be any full-rank matrix for which $RS = 0$, where $n_s = n - n_c$.  
% Here, the unknowns $\Xu_s = S^T\Xu$ are analogous to the fine-grid-only variables (i.e., $F$-points) in AMG.  In addition, $R$ and $S$ form an orthogonal decomposition of $\XReals^n$. Namely,  for all $\Xe \in \XReals^n$, $\Xe = S \Xe_s + R^T \Xe_c$, for some $\Xe_s$ and $\Xe_c$. The corresponding interpolation that minimizes the constant $K $ bounding the convergence rate for a given $R$ (or equivalently $S$) is the so-called ideal interpolation takes the following form:
 begins with the $\ell_2$-space decomposition of $V=\mathbb{R}^n =\mathrm{span}(S) \oplus \mathrm{span}(R^T)$ into a basis for the fine variables, the columns of the matrix $S\in\mathbb{R}^{n\times n_s}$, and a basis for the coarse variables, the columns of the matrix $R^T \in \mathbb{R}^{n\times n_c}$. If we assume the orthogonality conditions  $RS=0$, $S^TS = I_s$, and $RR^T = I_c$ as in GAMG, then 
\begin{equation}\label{eqn:split}
    \Xu = \ZQS \Xu + \ZQR \Xu = S\Xu_s + R^T \Xu_c, \quad \ZQS = \ZS \ZS^T, \quad \ZQR = \ZR^T\ZR, 
\end{equation}
with $\Xu_s$ denoting the fine variables and $\Xu_c$ the coarse variables in these new bases.  The corresponding ideal interpolation used as motivation is then:
\begin{equation} \label{eqn-ideal}
\begin{split}
  P_{\star} := \arg\min_{P:\, RP=I_c} K(PR)
  &=
  \left[~ S ~~ R^T ~\right]
  \left[ \begin{array}{c}
    - (S^T A S)^{-1} (S^T A R^T) \\ I
  \end{array} \right] 
  \\[1ex]
  &=
  (I - S(S^T A S)^{-1} S^T A) R^T.
\end{split}
\end{equation}
This form of ideal interpolation gives a global harmonic extension in these new bases that minimizes the two-grid convergence rate for a fixed smoother. 

% Before proceeding with the description of our AMG smoother we make the important observation that, given fast convergence of CR, ideal interpolation is actually a CR iteration in the sense that it is the limit of the associated CR process:
%  \begin{equation}\label{CR-idealP}
%  \Pid :=   (I - \pi_A(\ZS)) \ZR^T = \lim_{k \rightarrow \infty} R^T - S(M_S^{-1})^k\ZS^T A\ZR^T.
% \end{equation}
% Hence, CR not only guides the coarsening process and measures the suitability of the smoother, but it also relates the coarse variables harmonically at local scales (for small $k$) and globally as $k\rightarrow \infty$.  
% Notably, our local harmonic extensions give an exact representation of our approximation to $\Pid$ with $k=1$, where the CR iteration reduces to a decoupled block-Schwarz smoother that we apply directly to the nullspace averages defining $R$.  Since our definitions of the fine and coarse variables decouple, they also produce a natural ordering (coloring) for the multiplicative Schwarz method we use in the solver.  

\section{The Curl-Curl Equation}
Consider the shifted definite curl-curl equation
\begin{equation}\label{eqn:curl}
\begin{aligned}
\nabla \times\!\big(\mu^{-1}(x)\,\nabla \times \Xu\big) + \beta \Xu &= \Xf \quad \text{in } \Omega,\\
\Xu \times n &= 0 \quad \text{on } \partial\Omega,
\end{aligned}
\end{equation}
where $\beta>0$ is small, $\Omega \subset \mathbb{R}^d$, and $\mu^{-1}(x)$ is piecewise constant and may exhibit large jumps across interfaces.
Discretizing~\eqref{eqn:curl} with lowest-order N\'{e}d\'{e}lec edge elements~\cite{nedelec1980mixed,hiptmair2002finite} yields the SPD system
\begin{equation*}
    A \Xu = \Xf,\qquad A = A^s(\mu) + \beta A^m,
\end{equation*}
where $A^s(\mu)$ and $A^m$ denote the curl--curl stiffness and mass matrices, respectively.
By the de~Rham complex, the stiffness matrix satisfies
\begin{equation}\label{eq:deRham}
    \ker\!\big(A^s(\mu)\big) = \mathrm{range}(G),
\end{equation}
where $G$ is the discrete gradient mapping from the nodal space to the edge space.
As $\beta \to 0$, $A$ becomes increasingly ill-conditioned and exhibits a large $O(n)$ near-kernel. Therefore, non-pointwise smoothers~\cite{hiptmair1998multigrid} are needed for an optimal multigrid method.

%% ==================================================================================================
\subsection{AMG interpolation for curl-curl}
The construction of our AMG interpolation is motivated by the ideal interpolation operator defined in~\eqref{eqn-ideal}.
In classical AMG, the coarse and fine variables, $R$ and $S$, take the form
\begin{equation}\label{eq:classicalRS}
 \quad R= \left[\begin{array}{cc}  0  & I \end{array}\right] \\ 
, \quad S = \left[\begin{array}{c} I \\
 0 \end{array}\right]
\end{equation}
It is clear that this type of coarsening simply injects a subset of fine DoFs into the coarse mesh, which is highly inaccurate for preserving near-kernel structures. In fact, with this choice of $R$ and $S$, even under robust block smoothers, linear combinations of overlapping gradient modes are not accurately represented on the coarse grid when simple injection is used, which leads to convergence stagnation. Instead, as demonstrated in~\cite{reitzinger2002algebraic, tuminaro2025structure}, a key property for robust multilevel convergence for curl-curl is to satisfy the commuting property
\begin{equation} \label{eq:commuting}
    P G_c = GP_n
\end{equation}
where $G_c$ is the coarse level gradient and $P_n$ is some nodal interpolation. Therefore, when defining coarse variables, it is essential to preserve the correct oriented averaging of local near-kernel components so that the gradient structure is maintained on the coarse grid.

\section{Refinement-based Coarse Variables}\label{sec:refinement}
In this section, we construct the coarse variables and the resulting interpolation operator by following the geometric refinement of the mesh. To motivate the construction of $R$ and $S$, we begin with the curl-curl problem on a uniform quadrilateral mesh shown in Figure~\ref{fig:R_orientation}. The arrows indicate the orientation of the fine edges. Each degree of freedom corresponds to the tangential component along an oriented edge. Following geometric refinement, we can partition these 12 DoFs into the 8 external DoFs highlighted in blue and the remaining 4 interior DoFs. Specifically, each coarse edge is interpolated from two fine edges in the same blue circle and they represent the support of the two nonzeros in a row of $R$. 

\begin{figure}[h!]
    \centering
    \begin{subfigure}[t]{0.21\textwidth}
        \centering
        \includegraphics[width=\linewidth]{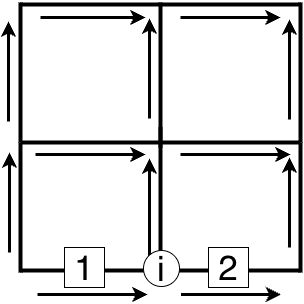}
        \caption{Nodal coarsening}
        \label{fig:R_orientation}
    \end{subfigure}
    \hfill
    \begin{subfigure}[t]{0.24\textwidth}
        \centering
        \includegraphics[width=\linewidth]{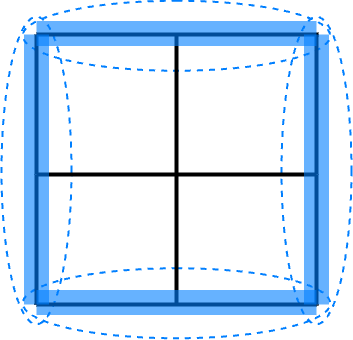}
        \caption{Pattern in $R$}
        \label{fig:R_quad}
    \end{subfigure}
    \hfill
    \begin{subfigure}[t]{0.26\textwidth} 
        \centering
        \includegraphics[width=\linewidth]{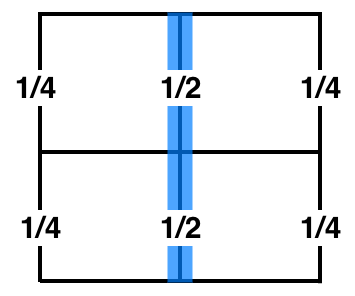}
        \caption{Vertical stencil}
        \label{fig:vertical_stencil}
    \end{subfigure}
    \hfill
    \begin{subfigure}[t]{0.22\textwidth} 
        \centering
        \includegraphics[width=\linewidth]{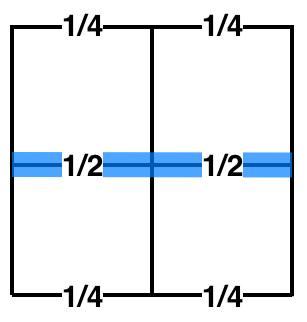}
        \caption{Horizontal stencil}
        \label{fig:horizontal_stencil}
    \end{subfigure}
    \caption{Curl-curl on uniform quadrilateral mesh.}
    \label{fig:curl-curl}
\end{figure}

We now focus on the bottom coarse edge in Figure~\ref{fig:R_orientation}, formed by the two fine edges DoFs labeled 1 and 2 meeting at the node $i$. The $i$-th column of the discrete gradient $G$, restricted to these two edge DoFs (after normalization), is 
\begin{equation*}
    \frac{1}{\sqrt{2}} \begin{bmatrix}
        1 &-1
    \end{bmatrix}^T,
\end{equation*}
with signs determined by the edge orientations in the figure. We assign this vector as a column of $S$, placing these entries on DoFs 1 and 2 and zeros elsewhere. Since $R$ and $S$ must be orthogonal, the corresponding row of $R$ must be the orthogonal complement of 
$\frac{1}{\sqrt{2}}\begin{bmatrix}
    1,-1
\end{bmatrix}^T$. We use the simplest choice 
\begin{equation*}
     \frac{1}{\sqrt{2}}\begin{bmatrix}
        1& 1
    \end{bmatrix}.
\end{equation*}
If one of the fine edges has an opposite orientation, the signs in both $R$ and $S$ must be flipped consistently according to $G$. To span the remaining interior space, we assign corresponding Euclidean basis vectors as columns in $S$. $S$ and $R$ admit the following block representations
\begin{equation}\label{eq:RS_splitting}
    S = 
    \begin{bmatrix}
        S_I & 
        S_E
    \end{bmatrix}, \qquad
    R = 
    \begin{bmatrix}
        0 & R_E
    \end{bmatrix},
\end{equation}
where the subscripts $I$ and $E$ denote the interior Euclidean bases and the exterior pairs, respectively. 

In this special case of Figure~\ref{fig:R_orientation}, where the two fine edges have consistent orientation, our choice of $R_E$ and $S_E$ coincides with the choice in Theorem 6.2 of~\cite{FalgoutGAMG}. In that setting, the coarse variables are defined purely to ensure orthogonality between $R$ and $S$, namely,
\begin{equation*}
     R_{E} = \begin{array}{c}
  \frac{1}{\sqrt{2}}\left[\begin{array}{ccc}   I &  I \end{array}\right]
  \end{array}, \qquad
  S_{E} = \begin{array}{c}
  \frac{1}{\sqrt{2}}\left[\begin{array}{ccc}  I &  I \end{array}\right]
  \end{array}
\end{equation*}
However, this choice does not explicitly incorporate the orientation induced by the $G$. Consequently, the resulting interpolation doesn't preserve the commuting in Equation~\ref{eq:commuting} in general. In contrast, our construction preserves near-kernels, as we show in the following section.

\subsection{Sparse approximation of the ideal interpolation}
% Recall that the ideal interpolation operator associated with the splitting 
% $S = [\, S_I \; S_E \,]$ and $R^T$ can be written in the form
% \begin{equation}\label{eq:idealP_SR}
% P_\star 
% = 
% \left(I - S (S^T A S)^{-1} S^T A \right) R^T.
% \end{equation}
% In this representation, the columns of $S_I$ span the interior modes, 
% while the columns of $S_E$ represent oriented exterior difference modes arising from pairs of fine edges. Due to the orientation-aware construction described above, 
% the decomposition
% \[
% \mathrm{span}\{R_{E_G}\} \oplus \mathrm{span}\{S_E\}
% \]
% provides a local splitting of the exterior edge space into components that 
% preserve discrete gradients and components that are locally orthogonal to 
% the gradient space $\XRange{G}$. Since $\ker(K) = \XRange{G}$ for the curl--curl operator, preserving the near-kernel requires that the coarse space reproduce the gradient modes exactly. The operator $P_\star$ accomplishes this by eliminating both the interior modes in $\XRange{S_I}$ and the exterior difference modes in 
% $\XRange{S_E}$ through an exact $A$-harmonic extension.

% However, eliminating the full space $\XRange{S}$ generally produces a dense 
% operator, since $(S^T A S)^{-1}$ couples interior and exterior components. 
To obtain a sparse approximation of the ideal interpolation defined in Equation~\ref{eqn-ideal}, we retain $S_I$ and drop $S_E$ in $S$. Intuitively, we drop $S_E^T$ because the exterior DoFs are already represented by the coarse variables in $R^T$, and so the interpolation reduces to a harmonic extension from the exterior into the interior. This leads to the sparse approximation
\begin{equation}\label{eq:sparseP}
P_{\mathrm{app}} 
:= 
\left(I - S_I (S_I^T A S_I)^{-1} S_I^T A \right) R^T = (I - \Pi_A(S_I))R^T,
\end{equation}
an interior $A$-harmonic extension of the gradient-preserving coarse variables.
This truncation yields a sparse approximation of the ideal interpolation operator because $S^T A S$ is simplified to $S_I^T A S_I = A_{II}^{-1}$. Moreover, the interior blocks are local and, by construction, decoupled from the exterior variables. As a result, $A_{II}^{-1}$ can be computed using independent local block inverses. 

We first show a crucial fact induced by our interior/exterior structural design. Notation-wise, we write the interior/exterior splitting of $A$ and $G$ as 
\begin{equation*}
    A =
    \begin{bmatrix}
    A_{II} & A_{IE} \\
    A_{EI} & A_{EE}
    \end{bmatrix},
    \qquad G = 
    \begin{bmatrix}
    G_{I}  \\
    G_{E}
    \end{bmatrix}.
\end{equation*}
Let $X_E(A) = A_{EE}-A_{EI}A_{II}^{-1}A_{IE}$ be the Schur complement of $A$ on the exterior.

\begin{lemma}\label{lem:XEs_SE_zero}
Assume the stiffness interior block $A^s_{II}$ is invertible. Then
\[
X_E(A^s)\,S_E = 0.
\]
\end{lemma}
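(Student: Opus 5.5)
The plan is to use the de~Rham kernel relation \eqref{eq:deRham}, $A^s G = 0$, together with the fact that each column of $S_E$ is the exterior restriction of a (normalized) gradient of a coarse-grid-interior node. The key observation is structural. For a node $i$ at the midpoint of a coarse edge, as in Figure~\ref{fig:R_orientation}, the column $G e_i$ is supported on the edges incident to $i$. Those are the two exterior fine edges forming the coarse edge, plus fine edges that lie in the interior of the adjacent coarse cells. So $G e_i$ has exterior part equal to $\sqrt{2}$ times the corresponding column of $S_E$, with signs matching by construction, and an interior part $G_I e_i$ that is generally nonzero. Writing $N$ for the set of such midpoint nodes, and scaling columns appropriately, I therefore expect
\[
G_{\cdot,N} D = \begin{bmatrix} G_{I,N} D \\ S_E \end{bmatrix}
\]
for a diagonal scaling $D = \tfrac{1}{\sqrt{2}} I$.

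Next, I would write out $A^s G_{\cdot,N} = 0$ in block form:
\[
A^s_{II} G_{I,N} D + A^s_{IE} S_E = 0, \qquad A^s_{EI} G_{I,N} D + A^s_{EE} S_E = 0 .
\]
Since $A^s_{II}$ is assumed invertible, the first equation gives $G_{I,N} D = -(A^s_{II})^{-1} A^s_{IE} S_E$. Substituting this into the second equation yields
\[
\bigl(A^s_{EE} - A^s_{EI}(A^s_{II})^{-1}A^s_{IE}\bigr) S_E = X_E(A^s)\,S_E = 0 .
\]
This is the standard fact that a kernel vector's exterior trace lies in the kernel of the Schur complement. Equivalently, the $A^s$-harmonic extension of $S_E$ is exactly the gradient, by uniqueness of the interior solve.

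The main obstacle is verifying the support claim: the exterior part of $G e_i$ must be exactly the two fine edges of the coarse edge through $i$, and the columns of $S_E$ must correspond one-to-one to such nodes. This needs two facts. First, every other edge incident to a coarse-edge midpoint is an interior DoF; in 2D quads this is the edge toward the cell center, and in 3D it includes face-interior edges, all of which belong to $S_I$. Second, the orientation signs in $S_E$ were chosen from $G$, as the construction stipulates. I would argue this directly from the refinement partition, where exterior DoFs are exactly the fine edges lying on coarse edges. I would also note that boundary nodes eliminated by $\Xu\times n=0$ only remove rows, so $A^sG=0$ still holds on the retained DoFs.
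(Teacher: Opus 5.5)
Your proposal is correct and follows essentially the same argument as the paper. Take a gradient column $\mathbf{g}$ whose exterior trace is (a multiple of) a column $\mathbf{s}$ of $S_E$, write $A^s\mathbf{g}=0$ in interior/exterior block form, eliminate $\mathbf{g}_I = -(A^s_{II})^{-1}A^s_{IE}\mathbf{s}$ using invertibility of $A^s_{II}$, and read off $X_E(A^s)\mathbf{s}=0$. Your explicit $1/\sqrt{2}$ scaling and your check that the gradient's exterior support is exactly the two fine edges of the coarse edge make precise what the paper asserts ``by construction.''
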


\begin{proof}
Let $\Xs$ be any column of $S_E$. By construction, there exists a column $\Xg$ of $G$ whose exterior restriction equals $\Xs$, namely,
\[
\Xg = \begin{bmatrix}\Xg_I\\ \Xs\end{bmatrix}
\quad\text{for some }\Xg_I.
\]
By the exact sequence~\eqref{eq:deRham}, we have $A^s \Xg=\XVec{0}$.
Writing this in block form gives
\[
A^s_{II}\Xg_I + A^s_{IE}\Xs = \XVec{0},
\qquad
A^s_{EI}\Xg_I + A^s_{EE}\Xs = \XVec{0}.
\]
Since $A^s_{II}$ is invertible,
\[
\Xg_I = -(A^s_{II})^{-1}A^s_{IE}\Xs.
\]
Substituting for $\Xg_I$ in the exterior equation yields the Schur complement on the exterior
\[
\bigl(A^s_{EE}-A^s_{EI}(A^s_{II})^{-1}A^s_{IE}\bigr)\Xs = \XVec{0} \quad \text{or} \quad X_E(A^s) \, \Xs=\XVec{0}
\]
Since this holds for each column of $S_E$, we conclude
$X_E(A^s) \,S_E=0$.
\end{proof}

By the same argument, $X_E(A^s) \,G_E=0$ also holds. Now, we are ready to present the main results. Following the GAMG framework \cite{FalgoutGAMG}, the weak approximation property for $P_{\mathrm{app}}$ is implied once the corresponding coarse-grid projection satisfies the approximate harmonic property \cite[(4.1)]{FalgoutGAMG}.
The next theorem establishes this property for $P_{\mathrm{app}}$ with the unshifted curl-curl operator $A^s$. Since $\beta = 0$, $P_{\mathrm{app}}$ reduces to
\[
    P_{\mathrm{app}} = (I - S_I (A^s_{II})^{-1} S_I^T A^s ) R^T
\]

\begin{theorem}[Approximate harmonic property for $P_{\mathrm{app}}$]
\label{thm:approx_harmonic_Papp}
Let $P_\star$ and $P_{\mathrm{app}}$ be defined as above, and set
$Q_\star := P_\star R$ and $Q_{\mathrm{app}} := P_{\mathrm{app}} R$.
Assume that $Q_\star$ satisfies the approximate harmonic property
\cite[(4.1)]{FalgoutGAMG}, i.e., there exists $\eta_\star \ge 1$ such that
\[
\|Q_\star \Xe\|_{A^s} \le \sqrt{\eta_\star}\,\|\Xe\|_{A^s}
\qquad \text{for all } \Xe\in\mathbb{R}^n .
\]
Then $Q_{\mathrm{app}}$ also satisfies \cite[(4.1)]{FalgoutGAMG} with
\[
\|Q_{\mathrm{app}} \Xe\|_{A^s} \le \sqrt{\eta_{\mathrm{app}}}\,\|\Xe\|_{A^s}
\qquad \text{for all } \Xe\in\mathbb{R}^n,
\]
where $\eta_{\mathrm{app}} \geq 1$.
\end{theorem}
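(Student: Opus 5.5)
Our plan is to compare $Q_{\mathrm{app}}$ and $Q_\star$ through the exterior Schur complement $X_E(A^s)$. Both are projections onto the range of an interpolation with the same coarse variables $R$; they differ only in how the exterior difference modes $S_E$ are eliminated. The key fact is that the $A^s$-energy of an interior-harmonic extension is the $X_E(A^s)$-seminorm of its exterior trace, and Lemma~\ref{lem:XEs_SE_zero} says $S_E$ is invisible in that seminorm.

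\emph{Step 1: reduce $Q_{\mathrm{app}}$ to its exterior trace.} Since $R=[0\ R_E]$, the exterior restriction of $P_{\mathrm{app}}$ is $R_E^T$, and the interior block is $-(A^s_{II})^{-1}A^s_{IE}R_E^T$. So $P_{\mathrm{app}}\Xu_c$ is the interior $A^s$-harmonic extension of $R_E^T\Xu_c$, and
\[
\|Q_{\mathrm{app}}\Xe\|_{A^s}^2=\bigl(X_E(A^s)\,R_E^TR\Xe,\;R_E^TR\Xe\bigr).
\]
This uses the standard block factorization of $A^s$ and the assumed invertibility of $A^s_{II}$.

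\emph{Step 2: remove the $S_E$ component.} Write $\Xe=[\Xe_I;\Xe_E]$. By orthogonality of the exterior pairs, $\Xe_E=R_E^TR_E\Xe_E+S_ES_E^T\Xe_E$, that is, $R_E^TR\Xe=\Xe_E-S_ES_E^T\Xe_E$. By Lemma~\ref{lem:XEs_SE_zero}, $X_E(A^s)S_E=0$, and $X_E(A^s)$ is symmetric. Hence
\[
\|Q_{\mathrm{app}}\Xe\|_{A^s}^2=(X_E(A^s)\Xe_E,\Xe_E).
\]
By the minimization property of the Schur complement, this is at most $\min_{\Xv_I}\|[\Xv_I;\Xe_E]\|_{A^s}^2\le\|\Xe\|_{A^s}^2$. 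So the bound already holds with $\eta_{\mathrm{app}}=1$, and in particular with any $\eta_{\mathrm{app}}\ge1$. If the statement is meant with the $\eta_\star$ dependence, we can also note $\|Q_{\mathrm{app}}\Xe\|_{A^s}\le\|Q_\star\Xe\|_{A^s}$. This follows because $Q_\star\Xe$ has exterior trace $\Xe_E$ modulo $S_E$ terms, so its energy is at least the Schur-complement energy of that trace, which Step 2 shows equals $(X_E(A^s)\Xe_E,\Xe_E)$. This would give $\eta_{\mathrm{app}}\le\eta_\star$.

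\emph{Main obstacle.} The delicate point is Step 2's exterior decomposition. We need $R_E^TR_E+S_ES_E^T=I_E$ on the exterior DoFs, which the pairwise construction of the orthonormal pairs $\tfrac1{\sqrt2}[1,\pm1]$ guarantees, together with the sign consistency with $G$ that Lemma~\ref{lem:XEs_SE_zero} relies on. We also must take care that $A^s$ is only semidefinite, so $\|\cdot\|_{A^s}$ is a seminorm. The Schur-complement minimization still holds because $A^s_{II}$ is invertible, and hence SPD, as a principal block of an SPSD matrix.
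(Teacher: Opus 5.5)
Your proof is correct, but it is more direct than the paper's. The paper argues perturbatively. It applies the triangle inequality $\|Q_{\mathrm{app}}\Xe\|_{A^s}\le\|Q_\star\Xe\|_{A^s}+\|(P_{\mathrm{app}}-P_\star)R\Xe\|_{A^s}$ and writes $P_{\mathrm{app}}-P_\star=H\,\Pi_{X_E(A^s)}(S_E)\,R_E^T$ with $H=\bigl[-(A^s_{II})^{-1}A^s_{IE};\,I\bigr]$. It then bounds the correction term by $\|\Xe\|_{A^s}$ using exactly your ingredients: the isometry $\|H\XVec{v}\|_{A^s}=\|\XVec{v}\|_{X_E(A^s)}$, the identity $R_E^TR_E=I-S_ES_E^T$, Lemma~\ref{lem:XEs_SE_zero}, and the Schur-complement bound. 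The result is $\sqrt{\eta_{\mathrm{app}}}\le 1+\sqrt{\eta_\star}$. You apply the same ingredients to $Q_{\mathrm{app}}$ itself, via $P_{\mathrm{app}}=HR_E^T$, and obtain the identity $\|Q_{\mathrm{app}}\Xe\|_{A^s}=\|\Xe_E\|_{X_E(A^s)}\le\|\Xe\|_{A^s}$. This gives $\eta_{\mathrm{app}}=1$ without using the hypothesis on $Q_\star$; in effect $Q_{\mathrm{app}}$ is an $A^s$-orthogonal projection in the seminorm sense.

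Your route gives a sharper constant, and it is also more robust. At $\beta=0$ the paper's formula contains $(S_E^TX_E(A^s)S_E)^{-1}$, which by Lemma~\ref{lem:XEs_SE_zero} is the inverse of a zero matrix. Indeed $S^TA^sS$ is already singular, because the gradient columns used in that lemma lie in $\XRange{S}$, so $P_\star$ itself needs a generalized interpretation. Your main argument never involves $P_\star$. Your side remark $\|Q_{\mathrm{app}}\Xe\|_{A^s}\le\|Q_\star\Xe\|_{A^s}$ holds for every $P$ with $RP=I_c$, and hence under any such interpretation. Your viewpoint also explains why the paper's bound is lossy: its correction term has range in $H\,\XRange{S_E}\subset\ker(A^s)$, so it carries zero $A^s$-energy. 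What the paper's route keeps is the explicit ``sparse versus ideal'' comparison that fits the GAMG narrative, at the price of a weaker constant.
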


\begin{proof}
By the triangle inequality,
\[
\|Q_{\mathrm{app}} \Xe\|_{A^s}
\le
\|Q_\star \Xe\|_{A^s}
+
\|(P_{\mathrm{app}}-P_\star) R \Xe\|_{A^s}.
\]
Since $Q_\star$ satisfies the approximate harmonic property, it suffices to bound the second term. A block computation gives
\[
P_{\mathrm{app}}-P_\star
=
\begin{bmatrix}
- {A^s_{II}}^{-1}A^s_{IE}\\
I
\end{bmatrix}\,\Pi_{X_E(A^s)}(S_E)\,R_E^T.
\]
Since $\Pi_{X_E(A^s)}(S_E)$ is a projector,
\[
\|\Pi_{X_E(A^s)}(S_E) \Xe\|_{X_E(A^s)} \le \|\Xe\|_{X_E(A^s)}.
\]
Moreover, a direct calculation shows
\[
\|\begin{bmatrix}
- {A^s_{II}}^{-1}A^s_{IE}\\
I
\end{bmatrix} \Xe\|_{A^s} = \|\Xe\|_{X_E(A^s)}.
\]
Hence,
\begin{align*}
\|(P_{\mathrm{app}}-P_\star)R\Xe\|_{A^s}
&= \bigl\|\begin{bmatrix}
- {A^s_{II}}^{-1}A^s_{IE}\\
I
\end{bmatrix}\,\Pi_{X_E(A^s)}(S_E)\,R_E^T R \Xe\bigr\|_{A^s} \\[0.2em]
&\le \bigl\|\Pi_{X_E(A^s)}(S_E)\,R_E^T R \Xe\bigr\|_{X_E(A^s)} \\[0.2em]
&\le \|R_E^T R \Xe\|_{X_E(A^s)}\\
&= \|(I-S_E S_E^T)\Xe_E\|_{X_E(A^s)}\\
&\le \|\Xe_E\|_{X_E(A^s)}+\|S_E S_E^T\Xe_E\|_{X_E(A^s)}\\
&= \|\Xe_E\|_{X_E(A^s)}\leq \|\Xe\|_{A^s}\\
\end{align*}
The fourth line uses the identity $ R_E^TR_E = I - S_E S_E^T$, since $R_E^TR_E$ is a projection onto $\XRange{R_E^T}$ and $S_ES_E^T$ is the complementary projection. The term $\|S_E S_E^T\Xe_E\|_{X_E(A^s)}$ vanishes by lemma~\ref{lem:XEs_SE_zero}. The last inequality follows by direct computation.
Finally, it follows that $\sqrt{\eta_{\mathrm{app}}} \le1+ \sqrt{\eta_\star}$.
\end{proof}

We now discuss the commuting relation for $P_\mathrm{app}$. We first need to define the coarse grid gradient $G_C$. It is natural to obtain the coarse gradient by projecting the fine gradient onto the coarse edge space. Accordingly, we set
\begin{equation}\label{eq:Gc}
    G_c := R G I_\mathcal{C},
\end{equation}
where $I_\mathcal{C}$ is an injection operator in the nodal space that selects all nonzero columns of $RG$. 

\begin{theorem}[Commuting property for $P_{\mathrm{app}}$]
\label{thm:commuting_Papp}
There exists a nodal interpolation operator $P_n$ such that the commuting relation holds
\begin{equation}\label{eq:commuting}
P_{\mathrm{app}}\,G_c = G\,P_n
\end{equation}
for the unshifted case $\beta = 0$.
\end{theorem}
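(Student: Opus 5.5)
Proposed plan. The goal is to show that the columns of $P_{\mathrm{app}} G_c$ are discrete gradients, and then to read off $P_n$ from their nodal potentials. Here $P_{\mathrm{app}}=(I-S_I(A^s_{II})^{-1}S_I^TA^s)R^T$ and $S_I=[I;0]$. For any exterior vector $\Xe_E$ we have
\[
P_{\mathrm{app}} R_E^T \Xe_E = \begin{bmatrix} -(A^s_{II})^{-1}A^s_{IE}\,R_E^T\Xe_E \\ R_E^T\Xe_E\end{bmatrix},
\]
which is the interior $A^s$-harmonic extension of $R_E^T\Xe_E$. So $P_{\mathrm{app}}G_c = H\,R_E^TR_E G_E I_{\mathcal C}$, where
\[
H:=\begin{bmatrix}-(A^s_{II})^{-1}A^s_{IE}\\ I\end{bmatrix}.
\]
I assume, as in Lemma~\ref{lem:XEs_SE_zero}, that $A^s_{II}$ is invertible.

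\textbf{Step 1: reduce the exterior trace to a gradient.} Using $R_E^TR_E = I - S_ES_E^T$, we get $R_E^TR_E G_E I_{\mathcal C} = G_E I_{\mathcal C} - S_E S_E^T G_E I_{\mathcal C}$. Each column of $S_E$ is, by construction, the exterior restriction of a normalized column of $G$ attached to a fine node $i$ that is a midpoint of a coarse edge (a non-coarse node). Hence $S_E = G_E D_{\mathcal F}$, where $D_{\mathcal F}$ selects and scales these nodal columns. This gives
\[
R_E^TR_E G_E I_{\mathcal C} = G_E\,(I_{\mathcal C} - D_{\mathcal F} S_E^T G_E I_{\mathcal C}) =: G_E\,\widetilde P_n,
\]
so the exterior trace of each column of $P_{\mathrm{app}}G_c$ is the exterior part of some gradient $G\widetilde P_n$. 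For this I need the precise description of the supports: which nodal columns survive in $I_{\mathcal C}$ (the coarse nodes), and how $S_E^T G$ acts on them. Concretely, a coarse edge $(c_1,c_2)$ with midpoint $i$ gives $S_E^T G e_{c_1} = \pm 1/\sqrt2$. The correction $D_{\mathcal F}S_E^T G$ then assigns to the midpoint $i$ the average of its two coarse endpoints. Hence $\widetilde P_n$ is, on the exterior nodes, exactly linear nodal interpolation.

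\textbf{Step 2: harmonic extension of a gradient's trace is a gradient.} Let $\Xg = G\Xc$ be any gradient and write $\Xg = [\Xg_I;\Xg_E]$. As in the proof of Lemma~\ref{lem:XEs_SE_zero}, $A^s\Xg=0$ together with the invertibility of $A^s_{II}$ gives $\Xg_I = -(A^s_{II})^{-1}A^s_{IE}\Xg_E$. Therefore $H\Xg_E = \Xg$. The trace determines the interior values of the gradient independently of how $\Xc$ is chosen at interior nodes. Applying this to the columns of $G\widetilde P_n$ yields $P_{\mathrm{app}}G_c = G\widetilde P_n$. So $P_n := \widetilde P_n$ works. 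Its values at interior nodes are arbitrary, for example set by nodal harmonic extension; they do not affect $G P_n$, since the interior edge values are forced by the exterior trace.

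\textbf{Main obstacle.} The delicate part is Step 1. There I must verify, from the orientation-consistent sign conventions of $R$ and $S$, that $R_E^TR_EG_EI_{\mathcal C}$ is exactly the exterior part of a gradient. This means checking that the columns of $S_E$ are genuinely $G_E$ applied to nodal unit vectors, i.e.\ that the node $i$ has no other exterior edges besides the pair. It also means checking that the coarse-node columns of $G$ interact with $S_E$ only through the two-edge blocks. I would handle this blockwise, one coarse edge at a time, using the $2\times2$ structure
\[
\tfrac1{\sqrt2}[1,\;1],\qquad \tfrac1{\sqrt2}[1,\;-1]^T
\]
with signs flipped according to $G$. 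A further point needs care: $I_{\mathcal C}$ keeps all nonzero columns of $RG$, so I must confirm these are precisely the coarse nodes. Midpoint columns vanish because $R_E$ is orthogonal to $S_E$.
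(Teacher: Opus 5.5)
Your proof is correct, but it takes a more constructive route than the paper. The paper never works with nodal potentials. It computes $A^sP_{\mathrm{app}}G_c=\begin{bmatrix}0\\ X_E(A^s)R_E^TR_EG_EI_{\mathcal C}\end{bmatrix}$ and splits $R_E^TR_E=I-S_ES_E^T$. It kills both resulting terms with Lemma~\ref{lem:XEs_SE_zero} (and its companion $X_E(A^s)G_E=0$). It then invokes exactness $\ker(A^s)=\mathrm{range}(G)$ to conclude the columns are gradients, so $P_n$ exists only implicitly; the authors explicitly decline to derive it.

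You use the same structural fact the Lemma rests on: each column of $S_E$ is a scaled exterior trace of a column of $G$, i.e.\ $S_E=G_ED_{\mathcal F}$. With it you factor $R_E^TR_EG_EI_{\mathcal C}=G_E(I-D_{\mathcal F}S_E^TG_E)I_{\mathcal C}$. Then $HG_E=G$, which is exactly the content of the Lemma's proof, gives $P_{\mathrm{app}}G_c=G\,(I-D_{\mathcal F}S_E^TG_E)I_{\mathcal C}$. This gives an explicit $P_n$: injection at coarse nodes and averaging at path midpoints. It also needs only $A^sG=0$, not the full exactness of the discrete de Rham complex. The paper's version is shorter and reuses the Schur-complement lemma verbatim.

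Your ``main obstacle'' is not really one. Once $S_E=G_ED_{\mathcal F}$ holds, which the paper also takes as given ``by construction'', the factorization is pure algebra and holds for any $I_{\mathcal C}$. The support analysis is needed only to recognize $P_n$ as linear interpolation.

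One aside is wrong: the values of $P_n$ at other ``interior'' nodes are not arbitrary. Suppose a node $v$ had only interior incident edges. Then $Ge_v$ would have zero exterior trace, so $A^s_{II}(Ge_v)_I=0$ with $(Ge_v)_I\neq 0$, contradicting the invertibility of $A^s_{II}$ that you assume. So no such nodes exist under your hypothesis. This is also why $A^s_{II}$ fails to be invertible under quadrilateral refinement, where each coarse cell has a center node. Moreover, with Dirichlet elimination $G$ has full column rank, so $P_n$ is uniquely determined by $GP_n=P_{\mathrm{app}}G_c$.

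Also, $P_{\mathrm{app}}R_E^T\Xe_E$ is dimensionally inconsistent. You mean $P_{\mathrm{app}}\Xe_c$ with $R^T\Xe_c=[0;\,R_E^T\Xe_c]$. Neither point affects the main argument.
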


\begin{proof}
It suffices to show that the columns of $P_{\mathrm{app}}G_c$ lie in $\ker(A^s)$. 
Then, a direct block computation yields 
\[
A^sP_{\mathrm{app}}G_c
= A^s\bigl(I-\Pi_{A^s}(S_I)\bigr)R^T R\,G\,I_\mathcal{C}
=
\begin{bmatrix}
0\\
X_E(A^s)\,R_E^TR_E\,G_E I_\mathcal{C}
\end{bmatrix}.
\]
Again, by $ R_E^TR_E = I - S_E S_E^T$, we can rewrite
\begin{align*}
    X_E(A^s) \, R_E^TR_E G_E I_\mathcal{C} 
    &=
    X_E(A^s) \, (I-S_ES_E^T) G_E I_\mathcal{C} \\
    &=
    X_E(A^s) \, G_E I_\mathcal{C} - X_E(A^s) \, S_E(S_E^T G_E I_\mathcal{C}).
\end{align*}
Both terms vanish by Lemma~\ref{lem:XEs_SE_zero}.
\end{proof}

This shows that such nodal interpolation $P_n$ exists, but we don't derive its explicit form because we don't use it in our multilevel construction. Different from~\cite{reitzinger2002algebraic, tuminaro2025structure}, where the commuting relation is derived from explicit topological constraints, our commuting arises naturally from the harmonic extension. A key advantage of the refinement-based approach is that it can recover exact refinement transfer. In particular, we have consistently observed numerically that our refinement-based interpolation reduces to geometric interpolation on isotropic curl-curl problems. In the example of Figures~\ref{fig:R_orientation}--\ref{fig:R_quad}, the sparse approximation~\eqref{eq:sparseP} yields the geometric interpolation stencils shown in Figures~\ref{fig:horizontal_stencil}--\ref{fig:vertical_stencil}, up to an $O(\beta)$ perturbation. In more general jump-coefficient settings, the method incorporates coefficient variations directly into the interpolation while preserving a clear geometric hierarchy.

Finally, we note that the invertibility of $A^s_{II}$ is not automatic. For the shifted curl-curl operator with $\beta \neq 0$, the weak approximation property and the commuting relation are no longer exact. However, our analysis and numerical experiments indicate that the defect term is small and scales linearly with $\beta$. Consequently, the commuting relation and the weak approximation property of $P_{\mathrm{app}}$ remain valid up to an $O(\beta)$ perturbation, which vanishes in the limit $\beta \to 0$. A fully rigorous treatment of the singular limit and the associated generalized Schur complement remains the subject of ongoing work.

%% ==================================================================================================

\section{Fully Algebraic Coarsening}
In section~\ref{sec:refinement}, we introduced a refinement-based AMG algorithm that takes advantage of the hierarchical geometric structure to build a sparse interpolation. The key ingredient was an interior/exterior splitting together with a local harmonic extension that preserves near-kernel components. In this section, we extend this strategy to a fully algebraic setting. The interpolation continues to take the form in Eq.~(\ref{eq:sparseP}). Thus, the main task is to construct the interior/exterior splitting algebraically in order to define the coarse variables $R^T$ and interior Euclidean basis $S_I$.

The following steps summarize the two-level setup:
\begin{enumerate}
 \item Augment the gradient matrix $\widetilde G = \mathrm{augmentG}(G)$ to recover boundary nodes.
 \item Form and coarsen $A_{\widetilde G} = {\widetilde G}^TA{\widetilde G}$.
 \item Construct the coarse and fine variables and compute the interpolation operator $P_{\mathrm{app}}$.
 \item Define the coarse-level gradient $G_c = R G I_\mathcal{C}.$
\end{enumerate}
We describe each step in detail below.

\subsection{Nodal coarsening }\label{sec:nodal}
Although the curl-curl operator is discretized with edges DoFs, we begin the algebraic construction by coarsening a dual nodal space. The motivation is that nodes are naturally associated with gradients. By nodal coarsening,  we implicitly select which gradient components are represented on the coarse grid. Moreover, nodal coarsening serves as an intermediate step that determines the endpoints of the support of the coarse-level gradient. These endpoints, in turn, define exterior edge variables by connecting edge DoFs between them. 

To formalize this idea, we first define the nodal dual operator
\begin{equation*}
    A_{ G} = { G}^TA{G}.
\end{equation*}
$A_G$ determines the effective coupling between the discrete gradient components induced by the fine level operator $A$. Its graph structure reflects the strength of interaction among local gradient modes and guides the selection of nodal DoFs that approximate the gradients on the coarse level. 

However, a drawback of $A_G$ is that it does not fully capture the local connectivity near the boundary, particularly under Dirichlet boundary conditions. Recall that each row of $G$ corresponds to an edge, and each column corresponds to a nodal DoF. An interior edge row has exactly two nonzeros. A boundary edge, however, has only one adjacent node in the domain, resulting in a single nonzero entry. Consequently, $A_G$ does not reflect the nodal adjacency near the boundary. To restore the boundary structure, we augment $G$ by appending an additional column for every boundary node so that each boundary edge is associated with two nodal entries of opposite sign. This modification preserves the signed-incidence structure of the gradient operator and ensures that the corresponding augmented nodal dual operator 
\begin{equation*}
    A_{\widetilde G} = {\widetilde G}^TA{\widetilde G}
\end{equation*}
encodes a consistent nodal graph across both the interior and the boundary. Algorithm~\ref{alg:augmentG} is presented below. Note that the boundary-node indices $\mathcal{B}$ are artificial and serve only to restore boundary connectivity.

\begin{algorithm}[H]
\caption{augment $G$ with boundary-node columns}\label{alg:augmentG}
\begin{algorithmic}[1]
\algnewcommand{\Initialize}[1]{%
\Statex \hspace*{\algorithmicindent}\parbox[t]{.8\linewidth}{\raggedright #1}
}
\Function{augment$G$}{$G$}
\Initialize{$\widetilde G = G$, \ $m=\text{number of columns of }G$, \ $k=0$}
\For{$i = 1,2,\dots,\text{number of rows of }G$}
    \If{$\mathrm{nnz}(G(i,:)) = 1$} \Comment{$i$ is a boundary edge-row}
        \State Let $\delta$ be the unique nonzero value in row $G(i,:)$
        \State $k \gets k + 1$
        \State{$\widetilde G =
            \begin{bmatrix}
                \widetilde G &
                \begin{bmatrix}
                0 & \cdots & [-\delta]_{i} & \cdots & 0
                \end{bmatrix}^T
            \end{bmatrix}$}
    \EndIf
\EndFor
\State $\mathcal{B} := \{m+1,\,m+2,\,\dots,\,m+k\}$ 
\State \Return $\widetilde G,\ \mathcal{B}$
\EndFunction
\end{algorithmic}
\end{algorithm}

We now perform a classical C/F splitting on $A_{\widetilde G}$ to select a coarse subset of nodal unknowns. In the scope of this paper, we mark all nearest neighbors of coarse nodes as fine nodes. The subsequent construction of the exterior/interior splitting is formulated consistently with this choice. More aggressive nodal coarsening would require a modified strategy for constructing exterior paths and is therefore left for future research. Algorithm~\ref{alg:CF_bdry_AG} describes this coarsening process. We denote the set of neighbors of a DoF $b$ in the graph of $A$ by $\mathcal{N}_{A}(b) := \{j \neq b: (A)_{bj} \neq 0\}$.

\begin{algorithm}[H]
\caption{C/F splitting for $A_{\widetilde G}$ with prescribed boundary nodes}\label{alg:CF_bdry_AG}
\begin{algorithmic}[1]
\algnewcommand{\Initialize}[1]{%
\Statex \hspace*{\algorithmicindent}\parbox[t]{.8\linewidth}{\raggedright #1}
}
\Function{nodalCF}{$A$, $\widetilde G$, $\mathcal{B}$}
\Initialize{$A_{\widetilde G}=\widetilde G^{T}A\widetilde G$, \ $\Xc=\mathcal{B}$, \ $\Xf=\emptyset$}
\For{$b \in \mathcal{B}$}
    \State $\Xf \gets \Xf \cup \mathcal{N}_{A_{\widetilde G}}(b)$ 
\EndFor
\State $(\Xc,\Xf)\gets \text{C/F splitting}\bigl(A_{\widetilde G};\, C_{\mathrm{ini}}=\Xc,\ F_{\mathrm{ini}}=\Xf\bigr)$ \label{alg:CF_ini}
\State $\XcG \gets \Xc \setminus \mathcal{B}$ \Comment{coarse set excluding boundary nodes}
\State $\XfG \gets \Xf$
\State \Return $\XcG,\ \XfG$
\EndFunction
\end{algorithmic}
\end{algorithm}

In this algorithm, the boundary-node indices $\mathcal{B}$ are prescribed as the initial coarse nodes, and their neighbors are marked as initial fine nodes (line~\ref{alg:CF_ini}). We then apply classical C/F splitting to the remaining interior nodal DoFs. Setting $\mathcal{B}$ as coarse nodes ensures that algebraic exterior paths can extend naturally to the boundary whenever needed in the later edge construction. Moreover, this choice helps maintain a structured and geometrically consistent exterior/interior splitting.

% \textcolor{red}{Up until this point, we have made no assumptions on the origin of the problem, only that the near-kernel matrix $K$ has been computed. } 
 
 % Then, having computed an appropriate splitting of the fine and coarse variables, as determined by coarsening and fast convergence of compatible relaxation (CR)~\cite{ABrandt_2000}, we compute an associated local harmonic approximation to ideal interpolation.  

% For instance, in the case of the curl-cur problem, the convergence factor of the two-grid method approaches 1 asymptotically when using the classical ideal interpolation as shown in detail in the numerical experiments in Section~\ref{sec:num}.

\subsection{Algebraic construction of coarse and fine variables}\label{sec:algebraic_p}
Having obtained the coarse nodal set $\XcG$, we now construct the coarse and fine variables in the edge space, namely $R$ and $S$. The goal is to algebraically reproduce the refinement-based interior/exterior splitting introduced in Section~\ref{sec:refinement}. We achieve this by using the nodal coarsening as an auxiliary tool for pairing edge DoFs. 

In particular, $\XcG$ determines the endpoints of algebraic length-two paths. For each pair of coarse nodes $i,j \in \XcG$ that are distance-two neighbors in the nodal graph induced by $A_{\widetilde G}$, we identify a mutual distance-one neighbor $k$. The edge path connecting $i\to k\to j$ select two fine edge DoFs, denoted $\{DoF_1,DoF_2\}$. We then form a coarse edge variable, or equivalently a row in $R$, by averaging these two DoFs with signs consistent with the orientation stored in $G$. All remaining edge DoFs are classified as the interior and form a Euclidean basis $S_I$. This completes the interior/exterior splitting of the edge space. Algorithm~\ref{alg:p} summarizes the full construction.
\begin{algorithm}[H] 
\caption{construct the interpolation $P$}\label{alg:p}
\begin{algorithmic}[1]
\algnewcommand{\Initialize}[1]{%
\Statex \hspace*{\algorithmicindent}\parbox[t]{.8\linewidth}{\raggedright #1}
}
\Function{form$P$}{$A$, $G$}
\Initialize{$R=\emptyset,\ \Xdof=\{1,\dots,n\},\ \mathcal{K}=\emptyset$}
\State{$\widetilde G,\ \mathcal{B} \gets \mathrm{augmentG}(G)$}
\State{$\XcG,\ \XfG \gets \mathrm{nodalCF}(A,\widetilde G,\mathcal{B})$}
\State{$\XcG \gets \XcG \cup \mathcal{B}$} \label{alg:boundary_path}\Comment{allow paths to boundary nodes}

\For{$(i,j)\in \XcG\times \XcG,\ i<j$}
  \If{$i,j$ are distance-two neighbors \textbf{and} $(i,j)\not\subset\mathcal{B}$}\label{alg:unique_path}
    \State $k\gets$ a mutual distance-one neighbor of $i,j$
    \If{$k\notin \mathcal{K}$} \label{alg:unique_gradient} \Comment{enforce unique distance-2 path}
        \State Select $\{DoF_1,DoF_2\}\subset\Xdof$ connecting $i\to k\to j$
        \State $\Xdof\gets \Xdof\setminus\{DoF_1,DoF_2\}$ \label{alg:distinct_pairing}
        \State $\mathcal{K}\gets \mathcal{K}\cup\{k\}$
        \State Construct $r$ with $\mathrm{supp}(r)=\{DoF_1,DoF_2\}$ and entries $\pm1$
        \State $R^T \gets [\, R^T \ \ r \,]$
    \EndIf
  \EndIf
\EndFor

\State $S_I \gets I(:,\Xdof)$ \Comment{$S_I^TAS_I=A_{II}$}
\State \Return {$P_\star = R^T - S_I A_{II}^{-1}(S_I^T A R^T)$}
\EndFunction
\end{algorithmic}
\end{algorithm}
We now make several important remarks on the design of Algorithm~\ref{alg:p}. First, the existence of distance-two paths is a direct consequence of the nodal coarsening strategy, which marks all nearest neighbors of coarse nodes as fine. As a result, coarse nodes are typically separated by exactly one intermediate fine node, which makes selecting length-two paths $i\to k\to j$ available. Second, line~\ref{alg:boundary_path} ensures that interior coarse nodes can form distance-two paths that extend to the boundary. Since all boundary nodes were initially prescribed as coarse in Algorithm~\ref{alg:CF_bdry_AG}, interior coarse nodes can connect to boundary nodes whenever a valid path exists. This design choice helps maintain a structural coarse-edge pattern. Third, the condition $(i,j)\not\subset\mathcal{B}$ in line~\ref{alg:unique_path} ensures a unique path between coarse nodes. This reduces grid complexity while preserving a structured coarse-level gradient. 

We emphasize that the algebraically constructed interpolation operator retains the key properties in Theorem~\ref{thm:approx_harmonic_Papp} and Theorem~\ref{thm:commuting_Papp}. Line~\ref{alg:distinct_pairing} enforces distinct edge DoFs in different rows of $R$, which preserves the orthogonality between $R$ and $S$. Moreover, line~\ref{alg:unique_gradient} ensures that only one path passes through one intermediate gradient. The set $\mathcal{K}$ keeps a record of which intermediate gradients, or intermediate fine nodes, have been used. By ensuring that each intermediate node participates in at most one path, we satisfy 
\begin{equation*}
    X_E(A^s) \,S_E = 0,
\end{equation*}
which is an essential condition for establishing the commuting relation in Theorem~\ref{thm:commuting_Papp}.

To illustrate the construction, we apply Algorithm~\ref{alg:p} to curl-curl on some sample uniform meshes. The results are shown in Figure~\ref{fig:curlcul_figure}. The red circles mark the coarse nodes, and blue denotes the distance-two paths that define the exterior variables in $R$. 

\begin{figure}[h]
    \centering
    \begin{subfigure}[t]{0.24\textwidth}
        \centering
        \includegraphics[width=\linewidth]{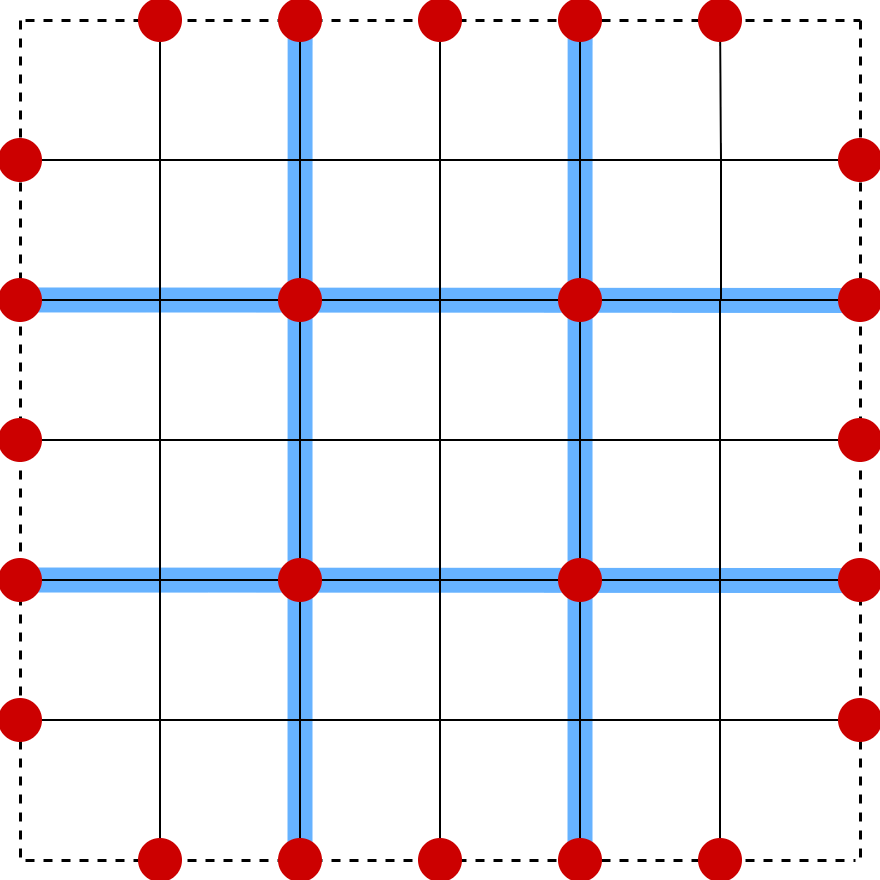}
        \caption{Square refinement}
        \label{fig:square_mesh}
    \end{subfigure}
    \hfill
    \begin{subfigure}[t]{0.24\textwidth}
        \centering
        \includegraphics[width=\linewidth]{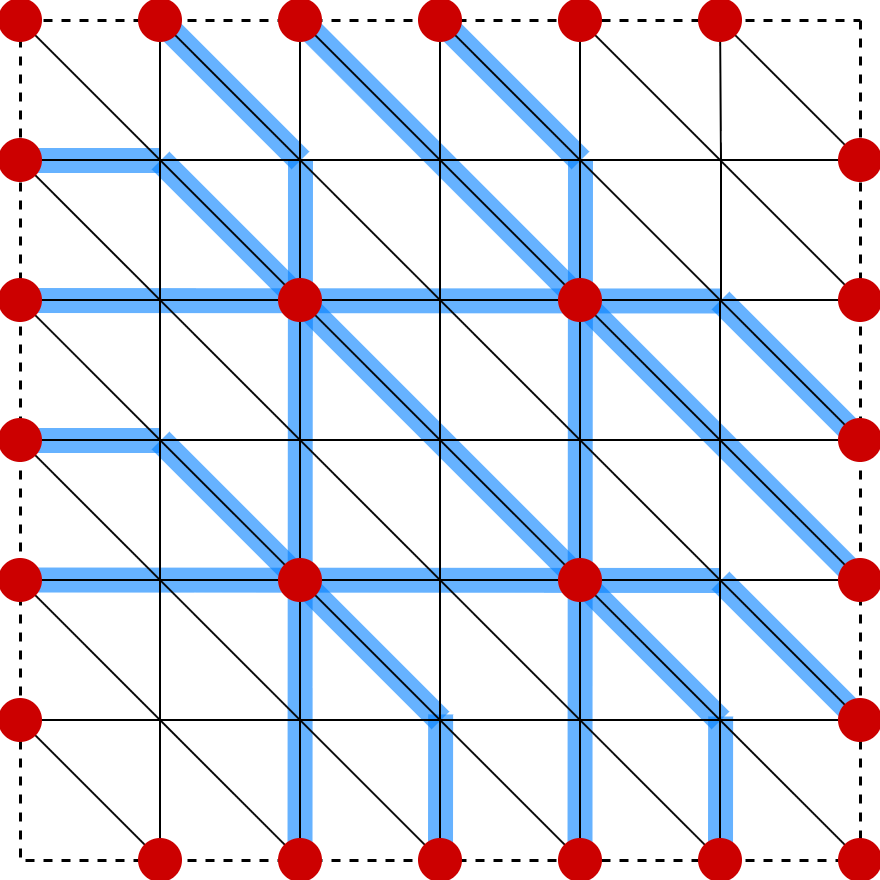}
        \caption{Triangle algebraic}
        \label{fig:tri_mesh_algebraic}
    \end{subfigure}
    \hfill
    \begin{subfigure}[t]{0.24\textwidth} 
        \centering
        \includegraphics[width=\linewidth]{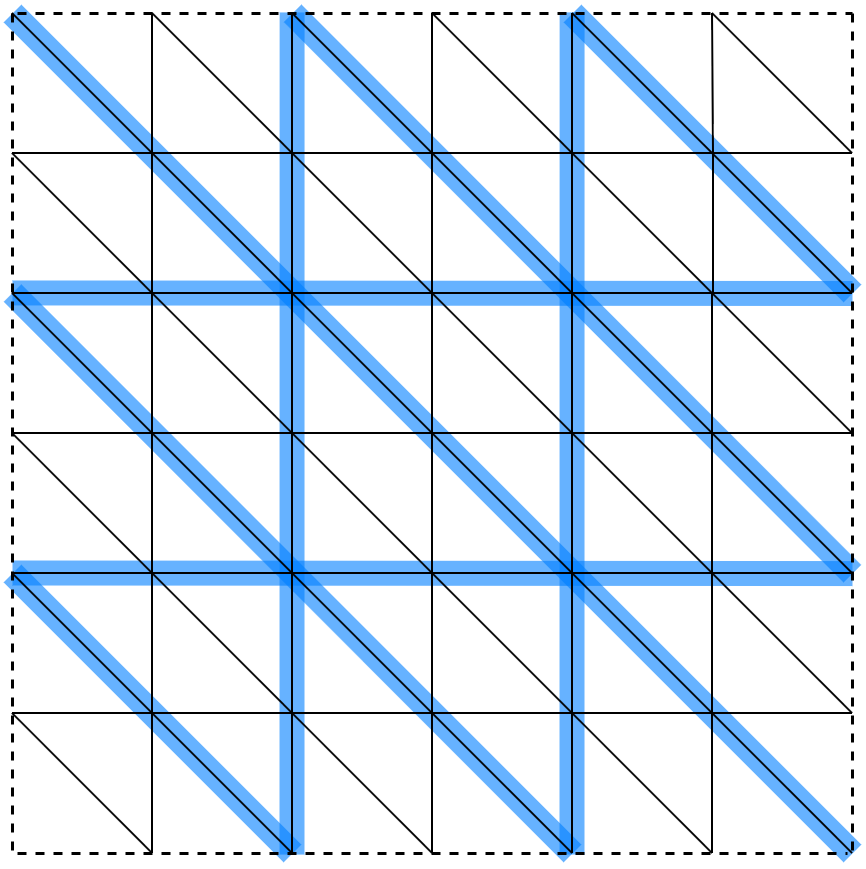}
        \caption{Triangle refinement}
        \label{fig:tri_mesh_refinement}
    \end{subfigure}
    \caption{uniform meshes with Dirichlet boundary conditions}
    \label{fig:curlcul_figure}
\end{figure}

Figure~\ref{fig:square_mesh} shows that, on a uniform square mesh, the algebraic splitting coincides exactly with geometric refinement. 
% Algorithm~\ref{alg:CF_bdry_AG} selects four coarse nodes in the interior, and they form straight edges to their distance-two neighbor nodes at the boundary. 
In general, the algebraic construction does not recover geometric refinement exactly. This is illustrated in the uniform triangular mesh case, where Figure~\ref{fig:tri_mesh_algebraic} differs from the geometric refinement shown in Figure~\ref{fig:tri_mesh_refinement}. Nevertheless, the algebraic splitting still produces highly structured gradients in the coarse grid. Moreover, as demonstrated in Section~\ref{sec:num}, the resulting convergence rates are very close.

Finally, we form the coarse-level gradient $G_C = RGI_\mathcal{C}$ in the multilevel hierarchy. For algebraic coarsening, $I_\mathcal{C} = I_\XcG$, the injection operator on the coarse nodes.

\section{Numerical Results}\label{sec:num}
We present numerical experiments for the curl–curl model problem discretized with lowest-order Nédélec edge elements. We evaluate the multilevel performance of the proposed sparsified ideal interpolation operators $P_{\mathrm{app}}$, constructed using both geometric refinement and fully algebraic splitting via Algorithm~\ref{alg:p}, and compare them with the geometric Nédélec interpolation. We test both isotropic and jump configurations. All experiments use a V-cycle multilevel method with one pre- and one post-smoothing step. The smoother we use is a symmetrized multiplicative Schwarz relaxation followed by an $\ell_1$-Jacobi iteration (OSS-$\ell_1$). The shift is set to $\beta = 0.01$ throughout. The stopping criteria is relative residual with a tolerance $10^{-8}$. We set a maximum of 20 AMG levels and a minimum coarse grid size of 32 DoFs for the algebraic coarsening. We report both standalone AMG V-cycle iteration counts and the preconditioned conjugate gradient (PCG) iteration counts, along with the operator complexity (OC), with respect to the refinement level $L$ and the problem size $n$. We use a randomly generated initial guess and a deterministic right-hand side $b = (1,2, \dots, n)^T$.  We denote the three different interpolations in the tables as follows:``Geo" for geometric Nédélec interpolation,``Ref" for refinement-based sparsified ideal interpolation, and ``Alg" for sparsified ideal interpolation generated algebraically by algorithm~\ref{alg:p}.

% uniform triangle
\begin{table}[h]
\centering
\caption{Uniform triangular mesh}
\label{tab:uniform}

\begin{filecontents*}{plot/uniform_tri.csv}
refinement level,size,amg_iter_alg,amg_conv_alg,pcg_iter_alg,pcg_geom_conv_alg,amg_levels_alg,operator_complexity_alg,grid_complexity_alg,amg_iter_geo,amg_conv_geo,pcg_iter_geo,pcg_geom_conv_geo,amg_levels_geo,operator_complexity_geo,grid_complexity_geo,amg_iter_ref,amg_conv_ref,pcg_iter_ref,pcg_geom_conv_ref,amg_levels_ref,operator_complexity_ref,grid_complexity_ref
4,736,25,0.4788929892454556,7,0.070306526,4,1.2784026996625422,1.296195652173913,22,0.4402791393065502,8,0.07449203,4,1.2868391451068617,1.3043478260869565,22,0.4402791393065502,8,0.07449203,4,1.2868391451068617,1.3043478260869565
5,3008,25,0.48027304623572037,7,0.066632069,5,1.3067351906951583,1.3164893617021276,23,0.44665403017772065,8,0.078814877,5,1.3094400865566675,1.3191489361702127,23,0.44665403017772065,8,0.078814877,5,1.3094400865566675,1.3191489361702127
6,12160,24,0.47723301817575037,7,0.064466929,6,1.320341007098786,1.3254934210526317,22,0.44450167149378444,8,0.08116654,6,1.3211703045180123,1.3263157894736841,22,0.44450167149378444,8,0.08116654,6,1.3211703045180123,1.3263157894736841
7,48896,24,0.472900804,7,0.062042982,7,1.326936663,1.329597513,21,0.438162232,8,0.08035329,7,1.32718311,1.329842932,21,0.438162232,8,0.08035329,7,1.32718311,1.329842932
8,196096,25,0.4801240894534611,7,0.059424974,8,1.330165693003937,1.331521295691906,23,0.44837737127648036,7,0.07037199,8,1.3302372357517507,1.3315926892950392,23,0.44837737127648036,7,0.07037199,8,1.3302372357517507,1.3315926892950392
\end{filecontents*}
\pgfplotstabletypeset[
  col sep=comma,
  columns={refinement level,size,
           amg_iter_geo,amg_iter_ref,amg_iter_alg,
           pcg_iter_geo,pcg_iter_ref,pcg_iter_alg,
        operator_complexity_geo,operator_complexity_ref,operator_complexity_alg},
  columns/refinement level/.style={column name={$L$}, fixed, precision=0},
  columns/size/.style={column name={$n$}, fixed, precision=0},
  columns/amg_iter_geo/.style={column name={Geo}, fixed, precision=0},
  columns/amg_iter_ref/.style={column name={Ref}, fixed, precision=0},
  columns/amg_iter_alg/.style={column name={Alg}, fixed, precision=0},
  columns/pcg_iter_geo/.style={column name={Geo}, fixed, precision=0},
  columns/pcg_iter_ref/.style={column name={Ref}, fixed, precision=0},
  columns/pcg_iter_alg/.style={column name={Alg}, fixed, precision=0},
  columns/operator_complexity_geo/.style={column name={Geo}, fixed, precision=2},
  columns/operator_complexity_ref/.style={column name={Ref}, fixed, precision=2},
  columns/operator_complexity_alg/.style={column name={Alg}, fixed, precision=2},
  every head row/.style={
    before row={
      \toprule
      & & \multicolumn{3}{c}{AMG}
        & \multicolumn{3}{c}{PCG}
        & \multicolumn{3}{c}{OC} \\
      \cmidrule(lr){3-5}
      \cmidrule(lr){6-8}
      \cmidrule(lr){9-11}
    },
    after row=\midrule
  },
  every last row/.style={after row=\bottomrule},
]{plot/uniform_tri.csv}

\end{table}

First, we test the isotropic problem on uniform triangular meshes in Table~\ref{tab:uniform}. This serves as a baseline to show that all three interpolations do not degrade performance on nice, regular meshes. Numerically, we observe that refinement-based sparsified ideal interpolation recovers the geometric Nédélec interpolation up to $\beta$ in isotropy, and thus it is not surprising that it exhibits the same iteration count and operator complexity. Our fully algebraic interpolation also yields nearly identical iteration counts and operator complexities, which suggests that the algebraic splitting produces a multilevel hierarchy that closely mimics the refinement in this setting. 

% uniform triangle with stripe jump
\begin{table}[ht]
\centering
\caption{Uniform triangular mesh with jumps}
\label{tab:jump}

\begin{filecontents*}{plot/jump_tri.csv}
refinement level,size,amg_iter_alg,amg_conv_alg,pcg_iter_alg,pcg_geom_conv_alg,amg_levels_alg,operator_complexity_alg,grid_complexity_alg,amg_iter_geo,amg_conv_geo,pcg_iter_geo,pcg_geom_conv_geo,amg_levels_geo,operator_complexity_geo,grid_complexity_geo,amg_iter_ref,amg_conv_ref,pcg_iter_ref,pcg_geom_conv_ref,amg_levels_ref,operator_complexity_ref,grid_complexity_ref
4,736,33,0.5541618925608678,7,0.071261184,4,1.2784026996625422,1.296195652173913,80,0.838561424,11,0.17653868908613637,4,1.2868391451068617,1.3043478260869565,8,0.1674454554518967,5,0.016118811,4,1.2868391451068617,1.3043478260869565
5,3008,35,0.5541626124367582,7,0.062014259,5,1.3067351906951583,1.3164893617021276,182,0.9284234921646553,17,0.3346513884771003,5,1.3094400865566675,1.3191489361702127,11,0.2387240789581214,5,0.016247038127265957,5,1.3094400865566675,1.3191489361702127
6,12160,33,0.554163164,7,0.06115859,6,1.320341007,1.325493421,278,0.954316077,24,0.450317798,6,1.321170305,1.326315789,12,0.28646615244101464,5,0.018834315259455326,6,1.3211703045180123,1.3263157894736841
7,48896,35,0.5541627278156241,6,0.044938451,7,1.326936663106876,1.3295975130890052,320,0.9594411477904067,26,0.49036491496986073,7,1.3271831101618337,1.3298429319371727,13,0.31562495181006767,5,0.020257006724841424,7,1.3271831101618337,1.3298429319371727
\end{filecontents*}
\pgfplotstabletypeset[
  col sep=comma,
   columns={refinement level,size,
           amg_iter_geo,amg_iter_ref,amg_iter_alg,
           pcg_iter_geo,pcg_iter_ref,pcg_iter_alg,
        operator_complexity_geo,operator_complexity_ref,operator_complexity_alg},
  columns/refinement level/.style={column name={$L$}, fixed, precision=0},
  columns/size/.style={column name={$n$}, fixed, precision=0},
  columns/amg_iter_geo/.style={column name={Geo}, fixed, precision=0},
  columns/amg_iter_ref/.style={column name={Ref}, fixed, precision=0},
  columns/amg_iter_alg/.style={column name={Alg}, fixed, precision=0},
  columns/pcg_iter_geo/.style={column name={Geo}, fixed, precision=0},
  columns/pcg_iter_ref/.style={column name={Ref}, fixed, precision=0},
  columns/pcg_iter_alg/.style={column name={Alg}, fixed, precision=0},
  columns/operator_complexity_geo/.style={column name={Geo}, fixed, precision=2},
  columns/operator_complexity_ref/.style={column name={Ref}, fixed, precision=2},
  columns/operator_complexity_alg/.style={column name={Alg}, fixed, precision=2},
  every head row/.style={
    before row={
      \toprule
      & & \multicolumn{3}{c}{AMG}
        & \multicolumn{3}{c}{PCG}
        & \multicolumn{3}{c}{OC} \\
      \cmidrule(lr){3-5}
      \cmidrule(lr){6-8}
      \cmidrule(lr){9-11}
    },
    after row=\midrule
  },
  every last row/.style={after row=\bottomrule},
]{plot/jump_tri.csv}

\end{table}
Next, in Table~\ref{tab:jump}, we consider discontinuous jumps in $\mu(x)$ on the same meshes, where the jump coefficient alternates between 10 and 0.1 with every vertical line of elements on the fine grid. This creates increasingly frequent high-contrast jumps and introduces strong misalignment between the coefficient interfaces in the multilevel hierarchy, making it a particularly challenging test for multigrid. In contrast to the isotropic case, the geometric Nédélec interpolation exhibits a clear deterioration in both the standalone AMG and PCG performance as the mesh refines. This is expected since pure geometric transfer does not capture coefficient variations across the interface. On the other hand, both the refinement-based sparsified ideal interpolation and the fully algebraic construction remain robust across refinements. This is because their $A$-orthogonal construction incorporates the jump coefficients directly into the interpolation operator and thus adapts to variations across interfaces. As a result, the iteration counts remain nearly uniform under refinement. Importantly, the operator complexity of the algebraic construction remains similar to that of refinement, showing that its robustness does not come at the expense of increased hierarchy cost. 

Finally, we consider discontinuous jumps on a set of Delaunay triangulations of the unit square. Different from~\ref{tab:jump}, the jump regions are fixed in space and do not increase with refinement. The coarsest level is an unstructured Delaunay mesh illustrated in~\ref{fig:delaunay_jump}, and we refine uniformly in each subsequent level. In addition to mesh irregularity, the unstructured distribution of jump regions poses a further challenge.

\begin{figure}[h]
    \centering
    \begin{subfigure}[t]{0.29\textwidth}
        \centering
        \includegraphics[width=\linewidth]{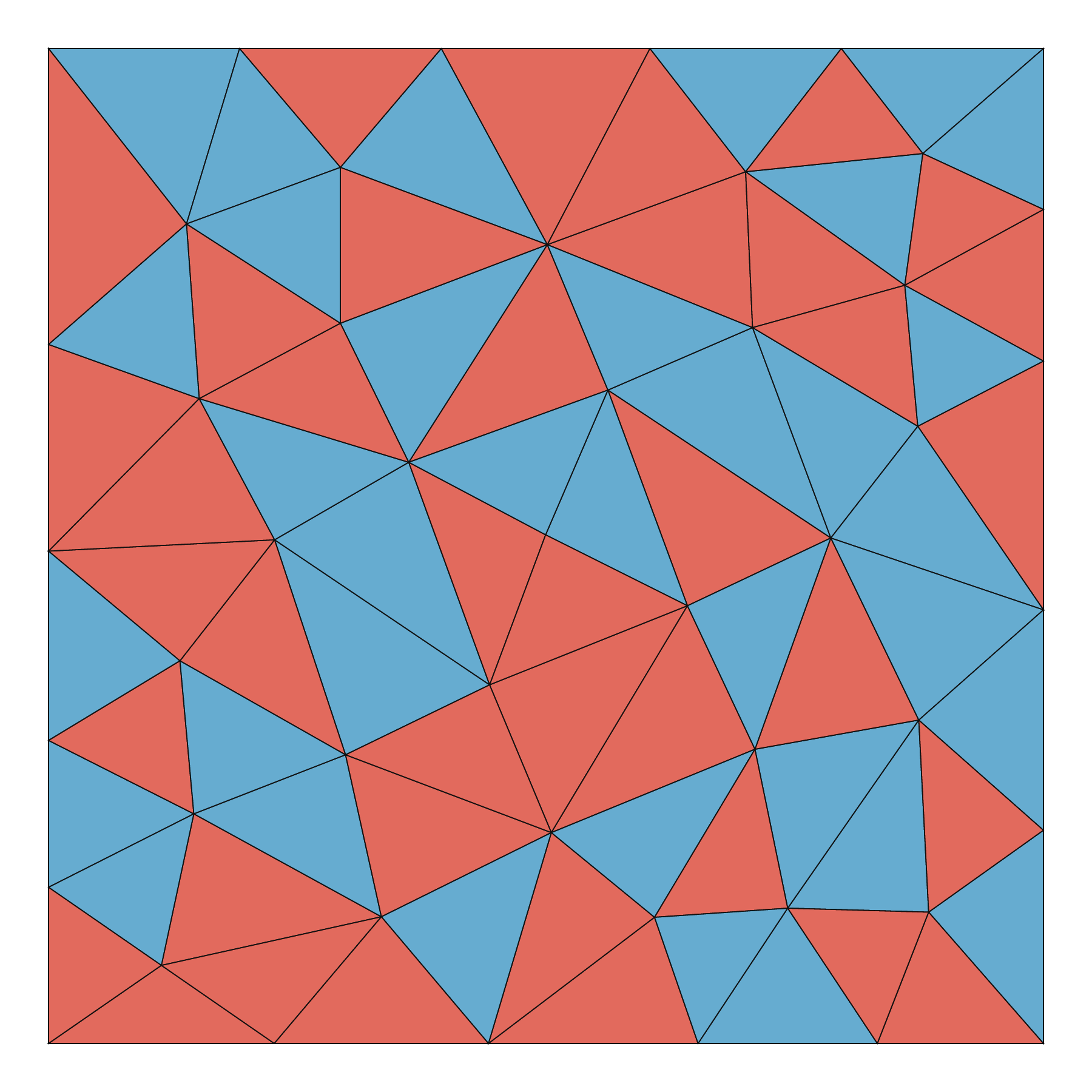}
        \caption{Delaunay mesh}
        \label{fig:delaunay_jump}
    \end{subfigure}
    \hspace{0.08\textwidth}
    \begin{subfigure}[t]{0.29\textwidth}
        \centering
        \includegraphics[width=\linewidth]{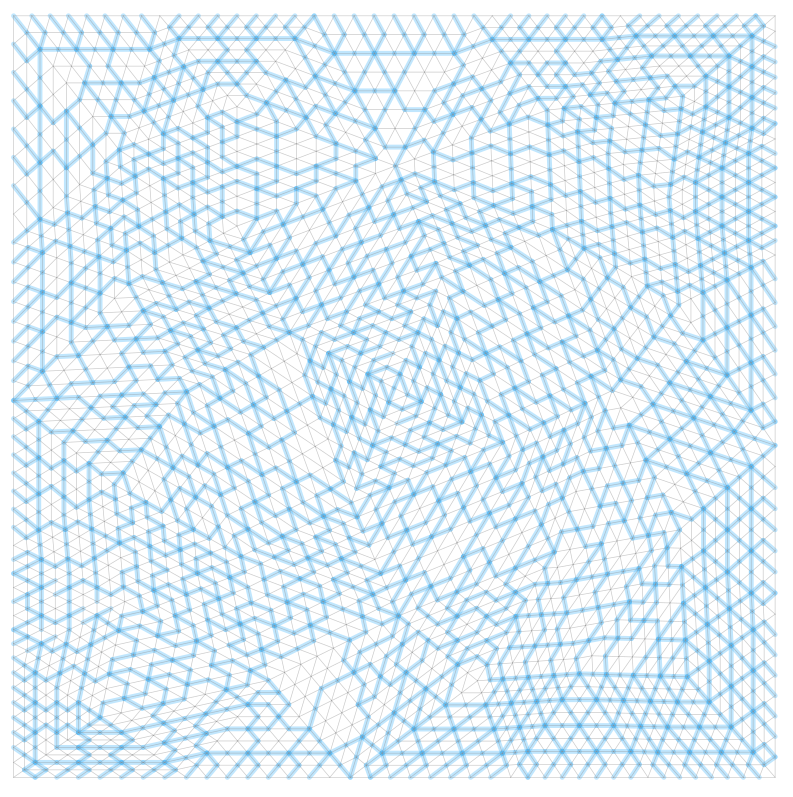}
        \caption{Algebraic coarsening}
        \label{fig:delaunay_algebraic}
    \end{subfigure}
    \caption{Unstructured Delaunay test. Left: piecewise-constant jump coefficients, $\mu=0.1$ in red and $\mu=10$ in blue. Right: finest level algebraic coarse variable pattern on $L=3$.}
    \label{fig:delaunay_fig}
\end{figure}

% delaunay jump
\begin{table}[h]
\centering
\caption{Delaunay mesh with jumps}
\label{tab:delaunay}

\begin{filecontents*}{plot/delaunay_jump.csv}
refinement level,size,amg_iter_alg,amg_conv_alg,pcg_iter_alg,pcg_geom_conv_alg,amg_levels_alg,operator_complexity_alg,grid_complexity_alg,amg_iter_geo,amg_conv_geo,pcg_iter_geo,pcg_geom_conv_geo,amg_levels_geo,operator_complexity_geo,grid_complexity_geo,amg_iter_ref,amg_conv_ref,pcg_iter_ref,pcg_geom_conv_ref,amg_levels_ref,operator_complexity_ref,grid_complexity_ref
3,7024,132,0.9635468876156135,23,0.42458095677915675,3,1.4393103448275861,1.2201025056947608,96,0.9360726582734029,23,0.4471229941706085,4,1.3168103448275863,1.3218963553530751,7,0.13000551252645798,6,0.037432421,4,1.3168103448275863,1.3218963553530751
4,28256,321,0.9907877410266124,37,0.6045704948216284,3,1.9206342434584756,1.2210503963759909,67,0.9419025176284337,21,0.40319515135555933,5,1.3258319112627985,1.3286027746319367,7,0.13782695679753612,6,0.037531246013847525,5,1.3258319112627985,1.3286027746319367
5,113344,400,0.9951739403131431,53,0.7023040686383603,3,3.2185130164119977,1.228331451157538,47,0.942565028,20,0.3928200737158485,6,1.3297697368421053,1.3312129446640317,8,0.17700850201345444,6,0.034780267,6,1.3297697368421053,1.3312129446640317
\end{filecontents*}
\pgfplotstabletypeset[
  col sep=comma,
   columns={refinement level,size,
           amg_iter_geo,amg_iter_ref,amg_iter_alg,
           pcg_iter_geo,pcg_iter_ref,pcg_iter_alg,
        operator_complexity_geo,operator_complexity_ref,operator_complexity_alg},
  columns/refinement level/.style={column name={$L$}, fixed, precision=0},
  columns/size/.style={column name={$n$}, fixed, precision=0},
  columns/amg_iter_geo/.style={column name={Geo}, fixed, precision=0},
  columns/amg_iter_ref/.style={column name={Ref}, fixed, precision=0},
  columns/amg_iter_alg/.style={column name={Alg}, fixed, precision=0},
  columns/pcg_iter_geo/.style={column name={Geo}, fixed, precision=0},
  columns/pcg_iter_ref/.style={column name={Ref}, fixed, precision=0},
  columns/pcg_iter_alg/.style={column name={Alg}, fixed, precision=0},
  columns/operator_complexity_geo/.style={column name={Geo}, fixed, precision=2},
  columns/operator_complexity_ref/.style={column name={Ref}, fixed, precision=2},
  columns/operator_complexity_alg/.style={column name={Alg}, fixed, precision=2},
  every head row/.style={
    before row={
      \toprule
      & & \multicolumn{3}{c}{AMG}
        & \multicolumn{3}{c}{PCG}
        & \multicolumn{3}{c}{OC} \\
      \cmidrule(lr){3-5}
      \cmidrule(lr){6-8}
      \cmidrule(lr){9-11}
    },
    after row=\midrule
  },
  every last row/.style={after row=\bottomrule},
]{plot/delaunay_jump.csv}

\end{table}
%
% delaunay jump
\begin{table}[ht]
\centering
\caption{Delaunay mesh with jumps--two level}
\label{tab:delaunay_2grid}

\begin{filecontents*}{plot/delaunay_2grid.csv}
refinement level,size,amg_iter_alg,amg_conv_alg,pcg_iter_alg,pcg_geom_conv_alg,amg_levels_alg,operator_complexity_alg,grid_complexity_alg,amg_iter_geo,amg_conv_geo,pcg_iter_geo,pcg_geom_conv_geo,amg_levels_geo,operator_complexity_geo,grid_complexity_geo,amg_iter_ref,amg_conv_ref,pcg_iter_ref,pcg_geom_conv_ref,amg_levels_ref,operator_complexity_ref,grid_complexity_ref
3,7024,9,0.2063821539506787,7,0.056815824,2,1.2648275862068965,1.2041571753986333,23,0.6899326455812618,10,0.14743274746448226,2,1.2448275862068965,1.2471526195899771,7,0.14352377760700816,6,0.034926337363601054,2,1.2448275862068965,1.2471526195899771
4,28256,9,0.24967186694589802,7,0.055066309,2,1.2871231513083048,1.206646376,14,0.5976061166581038,9,0.1269300394336989,2,1.2474402730375427,1.2485843714609286,7,0.12088728074347153,6,0.036564008,2,1.2474402730375427,1.2485843714609286
5,113344,10,0.32706008237514383,7,0.063741336,2,1.3107544567062819,1.215335615471485,13,0.6040348552133165,9,0.11722277558093055,2,1.2487266553480476,1.2492941840767928,7,0.12809356899676586,6,0.031625585,2,1.2487266553480476,1.2492941840767928
\end{filecontents*}
\pgfplotstabletypeset[
  col sep=comma,
   columns={refinement level,size,
           amg_iter_geo,amg_iter_ref,amg_iter_alg,
           pcg_iter_geo,pcg_iter_ref,pcg_iter_alg,
        operator_complexity_geo,operator_complexity_ref,operator_complexity_alg},
  columns/refinement level/.style={column name={$L$}, fixed, precision=0},
  columns/size/.style={column name={$n$}, fixed, precision=0},
  columns/amg_iter_geo/.style={column name={Geo}, fixed, precision=0},
  columns/amg_iter_ref/.style={column name={Ref}, fixed, precision=0},
  columns/amg_iter_alg/.style={column name={Alg}, fixed, precision=0},
  columns/pcg_iter_geo/.style={column name={Geo}, fixed, precision=0},
  columns/pcg_iter_ref/.style={column name={Ref}, fixed, precision=0},
  columns/pcg_iter_alg/.style={column name={Alg}, fixed, precision=0},
  columns/operator_complexity_geo/.style={column name={Geo}, fixed, precision=2},
  columns/operator_complexity_ref/.style={column name={Ref}, fixed, precision=2},
  columns/operator_complexity_alg/.style={column name={Alg}, fixed, precision=2},
  every head row/.style={
    before row={
      \toprule
      & & \multicolumn{3}{c}{AMG}
        & \multicolumn{3}{c}{PCG}
        & \multicolumn{3}{c}{OC} \\
      \cmidrule(lr){3-5}
      \cmidrule(lr){6-8}
      \cmidrule(lr){9-11}
    },
    after row=\midrule
  },
  every last row/.style={after row=\bottomrule},
]{plot/delaunay_2grid.csv}

\end{table}
Table~\ref{tab:delaunay} clearly shows that refinement-based interpolation significantly outperforms the geometric interpolation. The iteration counts for the geometric interpolation decrease under refinement because, as the mesh is refined, the relative influence of the jump interface reduces. This refinement dependence reflects its sensitivity to the jump coefficient distribution. In contrast, the constant iteration counts of the refinement-based interpolation demonstrate strong mesh independence. Moreover, its performance is insensitive to the irregular mesh geometry, the location of the jump regions, and the high jump coefficient contrast. This confirms that preserving the near-kernel structure through the refinement construction leads to robust multilevel convergence. However, the purely algebraic construction performs substantially worse in this unstructured setting. Both the operator complexity and the iteration counts increase noticeably with refinement. Nevertheless, the encouraging two-grid results of the same problem in Table~\ref{tab:delaunay_2grid} suggest that the algebraic coarsening framework has clear potential. In particular, the algebraic construction there exhibits iteration counts and operator complexities comparable to the refinement-based method, and they remain uniform across refinement. This suggests that the performance degrades rapidly after the second level. As illustrated in Figure~\ref{fig:delaunay_algebraic}, the algebraic coarsening on the finest mesh of $L=3$ leaves several large interior fine regions uncoarsened. These regions cause large coupling within the interior blocks, which then transfers to the corresponding coarse operators through the Galerkin projection. The same mechanism repeats on subsequent levels, leading to more coupling and rapid growth in operator complexity. This suggests that further refinement of the coarsening strategy is needed to better adapt to unstructured mesh geometry and prevent the formation of large interior fine blocks.

\section{Conclusions}\label{sec:conc}
 This work is the first \Hcurl AMG construction derived directly from the GAMG framework, offering a harmonic-extension-based alternative to classical topological and global energy-minimization approaches. Based on an interior/exterior splitting, we developed interpolation strategies in both a refinement-based and a fully algebraic setting. We established the theoretical foundation of this approach, including the weak approximation property and the commuting relation. Numerical results demonstrate the robustness of the refinement-based interpolation, which consistently outperforms geometric interpolation in strong, spatially varying jump problems. The fully algebraic construction shows promising behavior, although further improvements are needed to achieve optimal performance on highly unstructured meshes. Ongoing work focuses on strengthening the algebraic hierarchy and extending the framework to three-dimensional problems.

\bibliographystyle{siamplain}
\bibliography{references}

\end{document}